\documentclass{article}
\usepackage{graphicx} 

\usepackage[utf8]{inputenc}
\usepackage{amsmath}
\usepackage{amsthm}
\usepackage{amssymb}
\usepackage{mathtools}
\usepackage{bm}
\usepackage{enumitem}
\usepackage{graphicx}
\usepackage{tikz}
\usetikzlibrary{shapes.geometric, positioning, backgrounds}
\tikzset{source/.style={circle,fill=gray!70,draw,minimum size=0.5cm,inner 
sep=0pt}}
\tikzset{non-source/.style={circle,draw,minimum size=0.5cm,inner 
sep=0pt}}

\usepackage[a4paper,top=3cm,bottom=3cm,left=3cm,right=3cm,marginparwidth=4cm]{geometry}
\usepackage{todonotes}
\usepackage[numbers]{natbib}
\usepackage[ruled, linesnumbered]{algorithm2e}
\usepackage{hyperref}
\usepackage{cleveref}
\usepackage[indent,skip=\medskipamount]{parskip}

\newtheorem{theorem}{Theorem}
\newtheorem{lemma}{Lemma}
\newtheorem{corollary}{Corollary}

\newtheorem{conjecture}{Conjecture}

\theoremstyle{definition}

\DeclarePairedDelimiter\ceil{\lceil}{\rceil}

\newcommand{\Bound}{\ceil{\sqrt{n}}}

\title{The Graph Burning Conjecture is true for trees without degree-2 vertices}
\author{Yukihiro 
Murakami\thanks{\href{mailto:y.murakami@tudelft.nl}{y.murakami@tudelft.nl}}}
\date{%
	Department of Applied Mathematics, Delft University of Technology, the 
	Netherlands\\[2ex]%
	\today}

\begin{document}

\maketitle

\begin{abstract}
Graph burning is a discrete time process which can be used
to model the spread of social contagion. One is initially given a graph of 
unburned vertices. At each round (time step), one vertex is burned; unburned 
vertices with at least one burned neighbour from the previous round also 
becomes burned. The burning number of a graph 
is the fewest number of rounds required to burn the graph. It has been 
conjectured that for a graph on~$n$ vertices, the burning number is at 
most~$\Bound$. We show that the graph burning conjecture is true for trees 
without degree-2 vertices. 
\end{abstract}

\section{Introduction}

Given is a finite simple connected graph. Initially, all vertices are unburned. 
One new vertex, called a \emph{source}, may be burned every round. If a vertex 
was burned in the previous round, all its unburned neighbours become burned in 
the current round. Once a vertex is burned, it cannot be unburned. The 
\emph{burning number}~$b(G)$ of a graph~$G$ is the minimum number of rounds 
required to burn every vertex. See \Cref{fig:BurningNumber} for examples. 

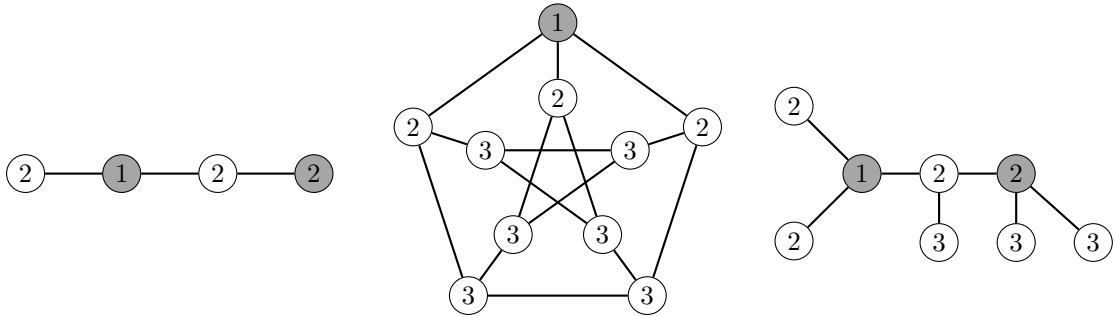
\begin{figure}[h!]
	\centering
	\begin{tikzpicture}
		\node[non-source] (1) {$2$};
		\node[source] (2) [right = 0.75cm of 1] {$1$};
		\node[non-source] (3) [right = 0.75cm of 2] {$2$};
		\node[source] (4) [right = 0.75cm of 3] {$2$};
		
		\path[draw,thick]
		(1) edge node {} (2)
		(2) edge node {} (3)
		(3) edge node {} (4);
		\begin{scope}[xshift=7cm]
			
			\node[source] (1) at (90:2) {$1$};
			\node[non-source] (2) at (162:2) {$2$};
			\node[non-source] (3) at (234:2) {$3$};
			\node[non-source] (4) at (306:2) {$3$};
			\node[non-source] (5) at (18:2) {$2$};
			\node[non-source] (6) at (90:1) {$2$};
			\node[non-source] (7) at (162:1) {$3$};
			\node[non-source] (8) at (234:1) {$3$};
			\node[non-source] (9) at (306:1) {$3$};
			\node[non-source] (10) at (18:1) {$3$};
			
			\path[draw,thick]
			(1) edge node {} (2)
			(1) edge node {} (5)
			(2) edge node {} (3)
			(3) edge node {} (4)
			(4) edge node {} (5)
			(1) edge node {} (6)
			(2) edge node {} (7)
			(3) edge node {} (8)
			(4) edge node {} (9)
			(5) edge node {} (10)
			(6) edge node {} (8)
			(6) edge node {} (9)
			(7) edge node {} (9)
			(7) edge node {} (10)
			(8) edge node {} (10);
		\end{scope}
		\begin{scope}[xshift=11cm]
			
			\node[source] (2) {$1$};
			\node[non-source] (1) [above left = 0.75cm of 2] {$2$};
			\node[non-source] (3) [below left = 0.75cm of 2] {$2$};
			\node[non-source] (4) [right = 0.5cm of 2] {$2$};
			\node[non-source] (5) [below = 0.4cm of 4] {$3$};
			\node[source] (6) [right = 0.5cm of 4] {$2$}; 
			\node[non-source] (7) [below = 0.4cm of 6] {$3$};
			\node[non-source] (8) [right = 0.5cm of 7] {$3$};
			
			\path[draw,thick]
			(1) edge node {} (2)
			(2) edge node {} (3)
			(2) edge node {} (4)
			(4) edge node {} (5)
			(4) edge node {} (6)
			(6) edge node {} (7)
			(6) edge node {} (8);
		\end{scope}
	\end{tikzpicture}
	\caption{The path on 4 vertices, the Petersen graph, and a HIT. The path 
	has burning number 2 and the other graphs have burning number 3. Sources 
	are highlighted in gray, and numbers on the vertices indicate the round in 
	which they become burned. Note that only one source suffices to 
	achieve the burning number for the Petersen graph, and two sources suffice 
	to achieve the burning number for the HIT.}
	\label{fig:BurningNumber}
\end{figure}

Such a problem was studied initially by Alon~\cite{alon1992transmitting} to 
solve a communication problem, where they showed that the burning number of an 
$n$-dimensional hypercube is $\ceil{n/2}+1$. Independently, the problem was 
investigated under the name \emph{graph burning} by Bonato et al. to model the
spread of social contagion and influential nodes~\cite{bonato2014burning}. The 
graph burning problem has since garnered much attention, with results focussed 
on bounds~\cite{Bastide2021}, complexity~\cite{bessy2017burning}, and 
approximation schemes~\cite{Martinsson2023}. The problem has also been 
investigated for the directed graph variant~\cite{janssen2020burning}. For a 
general summary on the state of the art, we refer the interested reader to the 
following survey~\cite{bonato2020survey}.

One particularly interesting family for graph burning is the path graph. It was 
shown by Bonato et al. that a path on~$n$ vertices has burning 
number exactly~$\Bound$~\cite{Bonato2016}. As paths are seemingly the most 
sparse with respect to graph burning, Bonato et al. gave the following 
conjecture in the same paper.
\begin{conjecture}[\emph{Burning Number Conjecture}  
\cite{Bonato2016}]\label{conj:BNC}
	Let~$G$ be a connected graph on~$n$ vertices. Then~$b(G)\le \Bound$.
\end{conjecture}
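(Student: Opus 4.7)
The plan is to reduce \Cref{conj:BNC} from arbitrary connected graphs to trees, and then from trees to HITs, so that the main theorem of the paper (proved in the body) serves as the base case.

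The first reduction is standard. Observe that if $H$ is a spanning connected subgraph of $G$, then $b(G)\le b(H)$: any schedule of sources that burns $H$ in $k$ rounds also burns $G$ in $k$ rounds, since the additional edges of $G$ only accelerate the spread of fire. Applying this with $H$ a spanning tree of $G$, it suffices to prove $b(T)\le\Bound$ for every tree $T$ on $n$ vertices.

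For the second reduction, I would suppress all degree-2 vertices of $T$ to obtain a HIT $T^{*}$. The suppression map identifies each edge $e$ of $T^{*}$ with a path $P_{e}$ in $T$ of some length $\ell_{e}\ge 1$, so that $n=|V(T^{*})|+\sum_{e\in E(T^{*})}(\ell_{e}-1)$. The main theorem gives a burning schedule $\sigma^{*}$ for $T^{*}$ using at most $\ceil{\sqrt{|V(T^{*})|}}$ sources. I would lift $\sigma^{*}$ to a schedule for $T$ by (i) mapping each source of $\sigma^{*}$ to its corresponding vertex in $T$ and (ii) inserting extra sources inside the long paths $P_{e}$, exploiting the fact that paths of length $\ell$ admit burning schedules of length $\ceil{\sqrt{\ell}}$ by the result of Bonato et al.~\cite{Bonato2016}.

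The main obstacle is controlling the total number of sources added in step (ii). Because the fire ignited at a HIT source also propagates into the incident subdivided paths $P_{e}$ for free, the residual uncovered portion of each $P_{e}$ shortens with each round, so the cost of the extra sources should be summable against $\sqrt{n-|V(T^{*})|}$ rather than against $\sum_e \sqrt{\ell_e-1}$, which a naive Cauchy--Schwarz estimate would give. However, combining the HIT bound $\sqrt{|V(T^{*})|}$ with a path-burning bound of the form $\sqrt{n-|V(T^{*})|}$ through the loose inequality $\sqrt{a}+\sqrt{b}\le\sqrt{2(a+b)}$ still overshoots $\Bound$. Closing this gap will require a tight amortised accounting that lets a single source simultaneously play the role of a HIT source and of a path source, trading rounds between the skeleton schedule and the interior schedules; I expect this dovetailing to be the hard technical step, analogous to how the paper's HIT argument balances sources across subtrees.
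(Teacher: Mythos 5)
This statement is a \emph{conjecture}: the paper does not prove it, and it remains open. What the paper proves is only the special case of HITs (\Cref{thm:TreesWOD2Vertices}), together with the observation that the conjecture for general graphs reduces to trees (\Cref{lem:Subtree}, quoted from Bonato et al.). So there is no proof in the paper to compare yours against, and any complete ``proof'' you produce should be treated with suspicion.

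Your proposal is not a complete proof, and the gap you acknowledge at the end is not a deferred technicality --- it is the entire content of the conjecture. Your first reduction (spanning trees) is fine and is exactly \Cref{lem:Subtree}. But the second reduction, suppressing degree-2 vertices and lifting a HIT schedule, collapses in precisely the cases that matter. Take $T$ to be the path on $n$ vertices: then $T^{*}$ is a single edge, $|V(T^{*})|=2$, and the HIT bound contributes essentially nothing; the ``extra sources inside the long paths $P_e$'' must do all the work, and bounding them by $\Bound$ is the original theorem of Bonato et al.\ for paths --- with no slack left over for the skeleton sources. Similarly, a spider with three legs of length about $n/3$ suppresses to a star on $4$ vertices, and your lift must burn the legs using at most $\Bound-2$ additional sources, which is essentially the (nontrivially proved) spider case of the conjecture. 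In general, whenever $d$ degree-2 vertices are suppressed, any accounting along your lines yields at best a bound of the shape $\ceil{\sqrt{n'+d}}$ with $n'+d=n$ only if the amortisation is perfectly tight; the paper's own \Cref{cor:AddLeaves} shows what this style of argument actually delivers, namely $b(T)\le\ceil{\sqrt{n+d}}$, which overshoots $\Bound$. The paper is explicit that its key structural ingredient, \Cref{lem:BoundOnInt}, is false for trees with degree-2 vertices and that ``another proof strategy is necessary for proving the full conjecture''; your proposed dovetailing of skeleton and path schedules is exactly the missing strategy, not a step one can expect to fill in.
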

In the same paper, they gave a reduction which shows that if \Cref{conj:BNC} is 
true for trees, then it is true for all graphs.
\begin{lemma}[Corollary 2.5 of \cite{Bonato2016}]\label{lem:Subtree}
	For a graph~$G$, we have that
	\[b(G) = \min\{b(T): \text{$T$ is a spanning tree of~$G$}\}.\]
\end{lemma}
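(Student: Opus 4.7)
My plan is to prove the identity by the two inequalities $b(G)\le b(T)$ for every spanning tree $T$, and $b(G)\ge b(T^\ast)$ for a carefully chosen spanning tree $T^\ast$.

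For the first inequality I would fix any spanning tree $T$ of $G$, take an optimal burning sequence $x_1,\dots,x_{b(T)}$ for $T$, and run the same sequence on $G$. Because $E(T)\subseteq E(G)$, we have $d_G(u,v)\le d_T(u,v)$ for all pairs, so the round in which $v$ burns in $G$, namely $\min_i(i+d_G(x_i,v))$, is at most the analogous quantity in $T$, which is at most $b(T)$. Hence $b(G)\le b(T)$, and minimising over $T$ gives one direction.

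For the converse I would fix an optimal burning sequence $x_1,\dots,x_k$ for $G$ (with $k=b(G)$) and write $\tau(v)$ for the round in which $v$ burns. I would then build a ``fire forest'' $F$: for every non-source vertex $v$, the burning rule forces the existence of a $G$-neighbour $u$ with $\tau(u)=\tau(v)-1$, and I would pick such a $u$ as the parent of $v$. The components of $F$ are then trees rooted at the sources, with $d_F(x_i,v)=\tau(v)-i$ on the component rooted at $x_i$. Since $G$ is connected and $F$ is a spanning forest, $F$ can be completed to a spanning tree $T^\ast$ of $G$ by adding further edges of $G$.

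The final step is to verify that running the same sequence $x_1,\dots,x_k$ on $T^\ast$ finishes by round $k$. The key observation is that $F\subseteq T^\ast$, so the $F$-path from $x_i$ to any $v$ in its component survives in $T^\ast$, giving $d_{T^\ast}(x_i,v)\le \tau(v)-i$ and hence $v$ burns in $T^\ast$ by round $\tau(v)\le k$. The delicate point, and main obstacle, will be this spanning-tree completion: one must argue that forcibly adding edges to stitch the components of $F$ together cannot slow the burning process, and this is resolved precisely because $F$ is preserved as a subgraph of $T^\ast$, so each vertex retains its short route to its designated source.
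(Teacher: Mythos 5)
The paper does not prove this statement: it is imported verbatim as Corollary 2.5 of \cite{Bonato2016}, so there is no in-paper argument to compare against and your proof must be judged on its own. Your two-inequality strategy is the standard one. The easy direction ($b(G)\le b(T)$ for every spanning tree $T$, because deleting edges only increases distances) is correct and complete, granting the standard characterisation that $v$ burns in round $\min_i\bigl(i+d(x_i,v)\bigr)$.

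In the harder direction there is one genuine, though easily patched, slip. You root the fire forest at the sources and claim $d_F(x_i,v)=\tau(v)-i$ on the component rooted at $x_i$. Following parent pointers from $v$ decreases $\tau$ by exactly one per step, so what is actually true is $d_F(x_i,v)=\tau(v)-\tau(x_i)$, and this equals $\tau(v)-i$ only when $\tau(x_i)=i$. But a burning sequence may name a source $x_i$ that is already burned when round $i$ arrives (the paper's own definition reads ``burn the vertex $x_i$ (if it is not burned already)''), in which case $\tau(x_i)<i$, your bound $d_{T^\ast}(x_i,v)\le\tau(v)-i$ is false, and igniting $x_i$ only at round $i$ in $T^\ast$ reaches $v$ at round $i+\tau(v)-\tau(x_i)>\tau(v)$, which can exceed $k$. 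Two standard repairs: (1) normalise first --- in an optimal sequence not every vertex is burned before round $i$, so an already-burned $x_i$ can be replaced by a still-unburned vertex, which only adds fire, after which $\tau(x_i)=i$ for all $i$; or (2) assign a parent to \emph{every} vertex (source or not) that has a neighbour burned one round earlier, so that the parentless roots are exactly the sources burned on time. With either fix the rest of your argument --- $F$ is acyclic, extends to a spanning tree $T^\ast$ of the connected graph $G$, and the unique $T^\ast$-path from $x_i$ to $v$ is the preserved $F$-path --- goes through.
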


While \Cref{conj:BNC} has not been proven in full, it has been shown to be true 
for certain graph classes, such as spider graphs~\cite{Bonato2019} and 
some~$p$-caterpillar graphs~\cite{Hiller2020}. The conjecture is also known to 
hold for large enough graphs with minimum degree 3 or 4~\cite{Bastide2021}, 
and it is known to hold asymptotically~\cite{Norin2022}. Recently, it was also 
shown to be true for trees where every internal vertex is of degree 
$3$~\cite{das2023burning}.

In this paper, we show that \Cref{conj:BNC} is true for 
\emph{homeomorphically irreducible trees (HITs)}\footnote{HITs also go by other 
names, such as \emph{series-reduced trees}, \emph{irreducible trees}, and 
\emph{topological trees}.}, 
which are trees without degree-2 vertices (\Cref{thm:TreesWOD2Vertices}). 
HITs are counterparts to paths, cycles, and hamiltonicity, as the former 
minimizes and the latter maximizes the number of degree-2 vertices.

\section{Burning number of HITs}\label{sec:HITbn}

Let~$G$ be a finite simple connected graph, and let~$xy$ be a bridge in~$G$. 
Let us denote the component that contains~$x$ upon deleting~$xy$ as~$G_x(xy)$. 
For vertices~$x,y$ in~$G$, let~$d_G(x,y)$ denote the length of a shortest path 
between~$x$ and~$y$.

\begin{lemma}\label{lem:TheRightBridge}
	Let~$n\ge6$. Any tree~$T$ on~$n$ vertices contains a vertex~$x$ with 
	neighbours~$v_1,\ldots, v_k$ such that~$|T_x(xv_k)|\ge 2\Bound-1$ 
	and~$|T_{v_i}(xv_i)|< 2\Bound-1$ for~$i\in[k-1] = \{1,2,\ldots, k-1\}$. 
\end{lemma}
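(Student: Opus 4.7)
The plan is to execute a short ``deepest heavy subtree'' argument. Set $B := 2\Bound - 1$. First I would root $T$ at an arbitrary leaf $r$ and, for each vertex $v$, let $s(v)$ denote the size of the subtree of the rooted tree hanging below $v$. The point of choosing $r$ to be a leaf is that its unique child inherits subtree size $n-1$, and a routine arithmetic check shows that $B \le n-1$ precisely when $n \ge 6$; this is exactly where the hypothesis enters.

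Next I would define $S := \{v \in V(T) : s(v) \ge B\}$ and pick $x$ to be any element of $S$ maximising distance to $r$. By the previous paragraph $S$ contains the child of $r$, so $x \ne r$ and $x$ has a well-defined parent $p$. Labelling $v_k := p$ and letting the children of $x$ play the roles of $v_1, \ldots, v_{k-1}$, the two conclusions of the lemma reduce to standard identifications of branches with rooted subtrees: $T_x(xp)$ is the subtree hanging below $x$, giving $|T_x(xv_k)| = s(x) \ge B$, while each branch $T_{v_i}(xv_i)$ for $i < k$ is the subtree hanging below a child $c$ of $x$, and $s(c) < B$ by the maximality of the depth of $x$.

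I do not foresee a serious obstacle: the whole argument is almost self-executing once the rooting is fixed. The two small points worth verifying carefully are that $x$ is not itself a leaf of $T$---a leaf has $s(\cdot) = 1 < B$ and so cannot belong to $S$---and that the hypothesis $n \ge 6$ is tight for this setup, the failure at $n = 5$ reflecting the genuine fact that no tree on five vertices admits such an $x$, since $|T_x(xv_k)| \le n - 1 = 4 < 5 = B$ is already unachievable.
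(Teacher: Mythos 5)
Your proof is correct and is essentially the paper's argument recast in extremal rather than iterative form: the paper starts at a leaf's neighbour and repeatedly steps into any branch of size at least $2\Bound-1$ until no such branch remains, and that walk terminates precisely at your deepest vertex $x$ with $s(x)\ge 2\Bound-1$, with the parent playing the role of $v_k$ and the children the roles of $v_1,\ldots,v_{k-1}$. One small quibble: $2\Bound-1\le n-1$ is not \emph{precisely} equivalent to $n\ge 6$ (it also holds at $n=4$), but you only use the implication from $n\ge 6$, so nothing breaks.
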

\begin{proof}
	Let~$T$ be a tree on~$n$ vertices. Take a leaf~$\ell$ and its 
	neighbour~$x$. We have that
	\[|T_x(x\ell)| = n-1 \ge 2\Bound-1,\]
	where the last inequality follows as~$n\ge6$. Let~$v_1,\ldots, v_k$ denote 
	the neighbours of~$x$ where~$v_k=\ell$. If~$|T_{v_i}(xv_i)|< 
	2\Bound-1$ for~$i\in[k-1]$, then we are done. Otherwise, there 
	exists a neighbour, without loss of generality,~$v_1$, 
	where~$|T_{v_1}(xv_1)| \ge 2\Bound-1$. Let~$u_1,\ldots, u_j$ 
	denote the neighbours of~$v_1$, where~$u_j=x$. If~$|T_{u_i}(v_1u_i)|< 
	2\Bound-1$ for~$i\in[j-1]$, then we are done. Otherwise, we 
	continue in the same manner; such a process must terminate as~$T$ is finite.
\end{proof}

\begin{lemma}\label{lem:BoundOnInt}
	Let~$n\in\mathbb{N}^{>0}$. Let~$T$ be a HIT where $|T|\le2n-1$. Then 
	$T$ contains at most~$n-2$ internal vertices.
\end{lemma}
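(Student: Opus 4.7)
The plan is a straightforward degree-counting argument exploiting the HIT hypothesis. Let $I$ denote the number of internal vertices of $T$ and $L$ the number of leaves, so that $|T| = I + L$. Since $T$ is a tree, the handshake lemma gives a degree sum of exactly $2(|T|-1) = 2(I+L) - 2$. On the other hand, since $T$ is a HIT, every internal vertex has degree at least $3$ while every leaf has degree $1$, so the degree sum is at least $3I + L$.

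Combining the two estimates, I would derive
\[ 3I + L \;\le\; 2I + 2L - 2, \]
which simplifies to $I \le L - 2$. Substituting back into $|T| = I + L$ yields $|T| \ge 2I + 2$, and using the hypothesis $|T| \le 2n-1$ gives $2I + 2 \le 2n - 1$, i.e., $I \le n - \tfrac{3}{2}$. Since $I$ is an integer, this forces $I \le n-2$, which is the desired bound.

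There is no real obstacle here; the only delicate point is ensuring the inequality $3I + L \le 2(I+L)-2$ is applied correctly, and noting that the HIT condition is exactly what is needed to push the leaf count above the internal count by at least $2$. The integrality step at the end is what saves a half-unit and gives the tight bound $n-2$ rather than a weaker one.
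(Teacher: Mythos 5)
Your proof is correct and follows essentially the same route as the paper: the key fact in both is that a HIT has at least two more leaves than internal vertices (which you derive explicitly from the handshake lemma, while the paper cites it as known), combined with $|T|=I+L\le 2n-1$. The only cosmetic difference is that the paper argues by contradiction (assuming $I\ge n-1$ forces $|T|\ge 2n$) whereas you argue directly and invoke integrality at the end.
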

\begin{proof}
	In a HIT, there are at least two more leaves than internal vertices. 
	Suppose for a contradiction that $T$ contains at least~$n-1$ internal 
	vertices. Then~$T$ contains at least~$n+1$ leaves, and we have our required 
	contradiction.
\end{proof}

\subsection{Graph Burning}

We shall formalize the notion of graph burning. Let~$G$ be a graph, and 
let~$S=(x_1,x_2,\ldots,x_k)$ be a sequence of vertices, called \emph{sources}. 
Initially, in 
round~$0$, all vertices start as an unburned vertex. In round~$i$, burn the 
vertex~$x_i$ (if it is not burned already) as well as any unburned vertices 
that have burned neighbours in round~$i-1$, for~$i\in[k]$. If at the end of 
round~$k$, all vertices of~$G$ are burned, we call~$S$ a \emph{burning 
sequence} for~$G$. The \emph{burning number}~$b(G)$ of~$G$ is the length of a 
shortest burning sequence for~$G$. Note that burning sequences do not in 
general have unique lengths. 

In the proof of the following results, we require a notion of graph burning 
where multiple sources can be burned in round 1. For~$U\subseteq V(G)$ and 
vertices~$x_i\in V(G)$, let~$M = (U\cup\{x_1\}, x_2, \ldots, x_k)$ be a 
sequence. In round 1, burn all vertices in the set~$U\cup\{x_1\}$; in round 
$i$, proceed as done in the traditional burning sequence. We call~$M$ a 
\emph{modified burning sequence} for~$G$ if all vertices of~$G$ are burned 
after round~$k$. The \emph{modified burning number}~$b^{U}(G)$ of~$G$ is the 
length of a shortest modified burning sequence for~$G$, with some 
set~$U\subseteq V(G)$.

\begin{lemma}\label{lem:HITwithD2Burned}
	Let~$T$ be a tree with one degree-$2$ vertex~$v$, and let~$T'$ be the HIT 
	obtained from~$T$ by smoothing~$v$. Then
	\[b^{\{v\}}(T) \le b(T').\]
\end{lemma}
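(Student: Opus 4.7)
The plan is to take an optimal burning sequence $\sigma = (x_1, \ldots, x_k)$ for $T'$, where $k = b(T')$, and to promote it to a modified burning sequence $\sigma' = (\{v, x_1\}, x_2, \ldots, x_k)$ for $T$. Every $x_i$ is a vertex of $T'$ and hence of $T$, so $\sigma'$ is well-defined and has length $k$; verifying that $\sigma'$ actually burns $T$ will yield $b^{\{v\}}(T) \le k = b(T')$, which is the claim.

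To verify $\sigma'$ burns $T$, I would show that every vertex $y \in V(T)$ satisfies $d_T(v, y) \le k - 1$ or $d_T(x_j, y) \le k - j$ for some $j \in [k]$, which is exactly the condition that $y$ is burned by round $k$ under $\sigma'$. The case $y = v$ is immediate. For $y \in V(T')$, I would use that $\sigma$ burns $T'$ to obtain some $j$ with $d_{T'}(x_j, y) \le k - j$, and then translate this bound from $T'$ to $T$.

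Let $a, b$ be the neighbors of $v$ in $T$, so that smoothing $v$ replaces the length-two path from $a$ through $v$ to $b$ by the single edge $ab$. Distances in $T$ and $T'$ can differ by at most $1$, and differ by exactly $1$ precisely when the unique $T'$-path between the endpoints uses $ab$, in which case the corresponding $T$-path passes through $v$. The main step requiring thought is this case: if the $T'$-path from $x_j$ to $y$ crosses $ab$, then $d_T(x_j, v) + d_T(v, y) = d_T(x_j, y) \le d_{T'}(x_j, y) + 1 \le k - j + 1$, and since $x_j \ne v$ forces $d_T(x_j, v) \ge 1$, I conclude $d_T(v, y) \le k - j \le k - 1$, so $v$ burns $y$. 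Otherwise distances are preserved and $x_j$ burns $y$ exactly as in $T'$. The only real obstacle is recognizing that whenever the $T$-distance strictly exceeds the $T'$-distance, the extra hop must go through $v$ itself, so the extra source placed at $v$ precisely absorbs the slack created by smoothing.
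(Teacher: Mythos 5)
Your proposal is correct and takes essentially the same route as the paper: both promote a burning sequence $(x_1,\ldots,x_k)$ for $T'$ to the modified sequence $(\{v,x_1\},x_2,\ldots,x_k)$ for $T$ and observe that any vertex whose distance to its burning source increases by one under un-smoothing must have $v$ on that path, so the source at $v$ absorbs the slack. The only cosmetic difference is that you verify the covering condition $d_T(x_j,y)\le k-j$ or $d_T(v,y)\le k-1$ directly, whereas the paper argues by contradiction about an unburned vertex.
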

\begin{proof}
	Let~$(x_1,\ldots, x_k)$ be a (not necessarily optimal) burning sequence  
	for~$T'$. We claim that $(\{v,x_1\}, x_2,\ldots, x_k)$ is a modified 
	burning sequence for~$T$. This would imply that any burning sequence 
	for~$T'$ yields a modified burning sequence of the same length for~$T$, 
	from which we may conclude that $b^{\{v\}}(T) \le b(T')$.
	
	Suppose for a contradiction that $(\{v,x_1\}, x_2,\ldots, x_k)$ is not a 
	modified burning sequence for~$T$. This means there exists a vertex~$w$ 
	in~$T$ that is not burned at the end of round~$k$. Clearly~$w$ cannot be 
	one of the sources. Since~$(x_1,\ldots, x_k)$ is a burning sequence 
	for~$T'$,~$w$ is a burned vertex at the end of round~$k$. Suppose that~$w$ 
	becomes burned in~$T'$ in round~$j$ for some~$j\le k$. Since~$w$ is not a 
	source, there must be a source~$x_i$ with~$i<j$ such that~$d_{T'}(w,x_i) 
	=j-i$.
	
	Because~$w$ remains unburned in~$T$, we must have that $d_{T}(w,x_i) 
	>j-i$. Then the path from~$w$ to~$x_i$ in~$T$ must contain the 
	vertex~$v$, as this is the only difference between trees~$T$ and~$T'$. It 
	follows that~$d_{T}(w,x_i) = j-i+1$. But then~$d_{T}(w,v)\le j-i$. This 
	would mean that~$w$ becomes a burned vertex in~$T$ no later than 
	round~$1+j-i$, since~$v$ is burned in round 1. Since~$1+j-i\le k$, this 
	means that~$w$ is burned in~$T$ at the end of round~$k$. This gives the 
	required contradiction.
\end{proof}

\begin{theorem}\label{thm:TreesWOD2Vertices}
	Let~$T$ be a HIT on~$n$ vertices. Then~$b(T) \leq \Bound$.
\end{theorem}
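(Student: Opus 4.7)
The plan is to proceed by strong induction on~$n$, with~$r := \Bound$ as shorthand. For the base case, check small~$n$ (say $n \le 5$) directly: any HIT on at most~5 vertices is a star, whose burning number is at most~2, and~$\Bound \ge 2$ as soon as $n \ge 2$.

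For the inductive step with $n\ge 6$, apply \Cref{lem:TheRightBridge} to extract a vertex~$x$ with neighbours $v_1,\ldots,v_k$ so that $|T_x(xv_k)|\ge 2r-1$ and $|T_{v_i}(xv_i)|<2r-1$ for $i\in[k-1]$. Write $A:=T_x(xv_k)$ (the ``small side'': $x$ together with the many small hanging subtrees) and $B:=T_{v_k}(xv_k)$ (the ``big side'' across the bridge). Then $|A|+|B|=n$ and the lemma gives the crucial inequality $|B|\le n-(2r-1)\le r^2-2r+1=(r-1)^2$.

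Next I would convert~$B$ into a HIT in order to invoke the inductive hypothesis. If $v_k$ has degree~3 in~$T$, then $v_k$ has degree~2 in~$B$, so smooth it to obtain a HIT~$B^*$ on $|B|-1$ vertices; otherwise~$B$ is already a HIT (with $v_k$ a leaf or of degree~$\ge 3$) and I set $B^*=B$. By induction, $b(B^*)\le \lceil\sqrt{|B^*|}\rceil\le r-1$, and \Cref{lem:HITwithD2Burned} upgrades any optimal burning sequence $(z_1,\ldots,z_s)$ of~$B^*$ (with $s\le r-1$) into a modified burning sequence $(\{v_k,z_1\},z_2,\ldots,z_s)$ for~$B$ that burns~$B$ in at most $r-1$ rounds with~$v_k$ pre-burned.

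The remaining step is to use these sources, together with one extra round worth of budget, to burn all of~$T$ in~$r$ rounds. The plan is to show $b^{\{v_k\}}(T)\le r-1$, from which $b(T)\le r$ follows by treating~$v_k$ as the first regular source. In this modified sequence, fire from~$v_k$ travels through~$x$ into every small subtree, reaching depth roughly $r-3$ from each~$v_i$, while the sources $z_1,\ldots,z_{s}$ handle~$B$. One augments by an additional source~$y$ chosen in~$A$ to mop up the vertices of the small subtrees that lie beyond the reach of the~$v_k$-fire.

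The main obstacle is precisely this coverage argument for~$A$: a single extra source cannot in general cover every deep small subtree, so one must exploit \Cref{lem:BoundOnInt} (which caps the number of internal vertices of a small HIT) to bound how many small subtrees can be ``deep'' enough to escape the~$v_k$-fire, and then argue via a short case analysis---separating (i)~$v_k$ being a leaf of~$T$ (so $|B|=1$ and the problem reduces to burning~$A$ directly using a near-leaf first source in the longest small subtree), from (ii)~$v_k$ internal (where the interaction of the $z_i$-fires with the $v_k$-fire near the bridge provides further coverage of~$A$)---that the chosen~$y$ indeed suffices. A careful accounting of how the~$(r-1)^2$ bound on~$|B^*|$ creates slack, and how Lemma~\ref{lem:BoundOnInt} forces the number of long small subtrees to be small, is what I expect to do most of the work.
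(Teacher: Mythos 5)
Your decomposition and the first half of your argument coincide with the paper's: you locate $x$ and $v_k$ via \Cref{lem:TheRightBridge}, derive $|T_{v_k}(xv_k)|\le(\Bound-1)^2$, smooth $v_k$ when it has degree~$2$ there, and combine the induction hypothesis with \Cref{lem:HITwithD2Burned} to burn $B=T_{v_k}(xv_k)$ in at most $\Bound-1$ rounds with $v_k$ pre-burned. The gap is in the final assembly. You propose to make $v_k$ the first source and to prove $b^{\{v_k\}}(T)\le\Bound-1$. With $v_k$ pre-burned, the fire only reaches $x$ in round~$2$, so a vertex of $A=T_x(xv_k)$ at distance $d$ from $x$ is reached by the $v_k$-fire in round $2+d$; within $\Bound-1$ rounds this covers only $d\le\Bound-3$, and within $\Bound$ rounds only $d\le\Bound-2$. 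But the hanging subtrees can contain vertices at distance $\Bound-1$ from $x$ (e.g.\ take $H_i$ to be a path of $\Bound-2$ internal vertices with pendant leaves attached to make it homeomorphically irreducible; this has $2\Bound-3<2\Bound-1$ vertices and depth $\Bound-1$ from $x$), arbitrarily many of the $k-1$ subtrees can be of this form, their deep vertices are pairwise far apart, and since all $\Bound-1$ remaining sources may be needed for $B$ there is no budget for even one ``mop-up'' source in $A$, let alone one per deep subtree. So the obstacle you flag is not a technicality: the intermediate claim $b^{\{v_k\}}(T)\le\Bound-1$ is false in general, and $b(T)\le\Bound$ cannot be reached along this route.

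The missing idea is to burn $x$, not $v_k$, in round~$1$, and to use \Cref{lem:BoundOnInt} to bound \emph{distances in $A$} rather than to count deep subtrees. Each $T_i=T[H_i\cup\{x\}]$ is a HIT on at most $2\Bound-1$ vertices, hence has at most $\Bound-2$ internal vertices; every vertex strictly between $x$ and a vertex of $H_i$ on their connecting path is internal in $T_i$, so every vertex of $A$ is within distance $\Bound-1$ of $x$. The single source $x$ therefore burns all of $A$ by round $\Bound$ with no auxiliary sources, and $v_k$ is burned automatically in round~$2$, which is exactly the timing needed to run the modified burning sequence $(\{v_k,z_1\},z_2,\ldots,z_s)$ for $B$ shifted one round later; the whole tree is burned by round $\max(\Bound,\,s+1)\le\Bound$.
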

\begin{proof}
	We prove by induction on the number of vertices~$n$. For the base case, we 
	consider the 4 HITs possible for~$n\le 5$. If~$n=1$, the HIT is a tree on a 
	single vertex, which has burning number 1. For~$n=2$, the HIT is a tree 
	with a single edge between two leaves, which has burning number 2. 
	For~$n=4$ and~$n=5$, we have star graphs, which have burning number 2 by 
	choosing the internal vertex to be the first source. We may now assume 
	that~$T$ is a HIT on $n\ge 6$ vertices, and that for all HITs with number 
	of vertices fewer than~$n-1$, the theorem holds.
	
	By \Cref{lem:TheRightBridge}, $T$ contains a vertex~$x$ with 
	neighbours~$v_1,\ldots, v_k=y$ such that~$|T_x(xy)|\ge 2\Bound-1$ 
	and~$|T_{v_i}(xv_i)|< 2\Bound-1$ for~$i\in[k-1]$. Let~$i\in[k-1]$. 
	We claim that the longest distance from $x$ to any vertex 
	in~$H_i:=V(T_{v_i}(xv_i))$ is at most~$\Bound -1$. 
	
	Consider the induced subtree~$T_i:=T[H_i\cup \{x\}]$. 	
	Since~$|H_i| <2\Bound-1$, it follows that~$|T_i| \le 2\Bound -1$. 
	By~\Cref{lem:BoundOnInt}, there can be at most $\Bound-2$ internal vertices 
	in~$T_i$. Since~$x$ is a leaf in~$T_i$, at most $\Bound-2$ vertices 
	of~$H_i$ are internal vertices in~$T$. It follows immediately that any 
	longest path in~$T$ from~$x$ to a vertex in~$H_i$ is of distance at 
	most~$\Bound$. As this is true for all~$i\in[k-1]$, it follows that the 
	distance from~$x$ to every vertex in~$T_x(xy)$ is at most $\Bound$.
	
	We now claim that~$T$ can be burned in at most~$\Bound$ rounds, by 
	burning~$x$ in round 1. Observe that upon burning~$x$ in round 1, with no 
	additional sources in~$T_x(xy)$, all vertices of~$T_x(xy)$ will be burned 
	by the end of round $\Bound$. Indeed, this occurs since the distance 
	from~$x$ to every vertex in $T_x(xy)$ is at most $\Bound$. 
	
	The vertex~$y$ will become burned in round 2, as it is a neighbour of~$x$. 
	It remains to show that~$b^{\{y\}}(T_y(xy))\le \Bound-1$. Suppose first 
	that~$y$ is a degree-$2$ vertex in~$T_y(xy)$. Let~$T'$ denote the HIT 
	obtained by smoothing~$y$ in~$T_y(xy)$. By \Cref{lem:HITwithD2Burned}, we 
	have that~$b^{\{y\}}(T_y(xy))\le b(T')$. On the other hand, suppose now 
	that~$y$ is not a degree-$2$ vertex. Then~$T_y(xy)$ itself is a HIT and we 
	have the inequality~$b^{\{y\}}(T_y(xy))\le b(T_y(xy))$. Now we also have 
	that
	\[|T'| < |T_y(xy)| = n - |T_x(xy)| \le n-2\Bound + 1 \le (\Bound-1)^2.\]
	It follows by induction hypothesis that~$b(T') \le \Bound -1$ 
	and~$b(T_y(xy))\le \Bound -1$. Therefore, we obtain $b^{\{y\}}(T_y(xy))\le 
	\Bound -1$, and we are done.
\end{proof}

It follows immediately from \Cref{lem:Subtree} and \Cref{thm:TreesWOD2Vertices} 
that the burning number conjecture is true for all graphs that contain a 
\emph{homeomorphically irreducible spanning tree (HIST)} (\Cref{cor:HIST}). 

\begin{corollary}\label{cor:HIST}
	Let~$G$ be a graph on~$n$ vertices with a HIST. Then~$b(G)\le \Bound$.
\end{corollary}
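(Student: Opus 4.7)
The plan is to combine the two results already in hand: \Cref{lem:Subtree} reduces burning on an arbitrary graph to burning on its best spanning tree, and \Cref{thm:TreesWOD2Vertices} bounds the burning number of any HIT on $n$ vertices by $\Bound$. Since a HIST is, by definition, a spanning tree of $G$ that is itself a HIT, these two pieces slot together directly.

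Concretely, I would first fix a HIST $T$ of $G$, which exists by hypothesis. Since $T$ is a spanning tree of $G$, \Cref{lem:Subtree} gives
\[b(G) \;=\; \min\{b(T'):\ T'\text{ is a spanning tree of }G\} \;\le\; b(T).\]
Next, because $T$ spans $G$, it has exactly $n$ vertices, and by the HIST hypothesis it has no degree-$2$ vertex, so $T$ is a HIT on $n$ vertices. Applying \Cref{thm:TreesWOD2Vertices} yields $b(T)\le \Bound$, and chaining the two inequalities gives $b(G)\le \Bound$.

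There is essentially no obstacle here: the only thing to verify is that a HIST of $G$ does meet the hypotheses of \Cref{thm:TreesWOD2Vertices}, i.e.\ that it is a tree on $n$ vertices with no degree-$2$ vertex, which is immediate from the definition of HIST. So the entire argument is just a two-line chain of inequalities, and the corollary indeed follows immediately as the authors claim.
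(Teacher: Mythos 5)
Your proposal is correct and matches the paper's argument exactly: the paper states that \Cref{cor:HIST} follows immediately from \Cref{lem:Subtree} and \Cref{thm:TreesWOD2Vertices}, which is precisely the two-step chain $b(G)\le b(T)\le\Bound$ you spell out. Nothing further is needed.
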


Apart from appearing in the movie `Good Will Hunting', HITs have been studied 
extensively within graph theory as a fundamental structure, with initial 
results concerning enumeration of labelled and unlabelled 
HITs~\cite{harary1959number}. 
HITs are counterparts to paths, cycles, and hamiltonicity, which all maximize 
the number of degree-2 vertices. Naturally, the problem of determining whether 
a graph contains a HIST has been thoroughly investigated. While 
this decision problem has been shown to be 
NP-complete~\cite{albertson1990graphs}, finding conditions for when a graph 
contains a HIST is an ongoing research 
area~\cite{zhai2019homeomorphically,furuya2023new}.
For example, it is known that every connected and locally connected graph with 
at least 4 vertices contains a HIST~\cite{chen2012homeomorphically}.

\section{Concluding Remarks}

We have shown that HITs (trees without degree-$2$ vertices) with~$n$ vertices 
satisfy the burning 
number 
conjecture, i.e., that they have burning number at most~$\Bound$. Consequently, 
any graph that contains a HIST also satisfies the 
burning number conjecture. 
HITs are antithetical to paths, as the former contains no degree-$2$ vertices 
and the latter maximizes them. It is interesting to see that in the extreme 
cases, with respect to the number of degree-2 vertices, the burning number 
conjecture holds; to prove the conjecture in full, it remains to show true for
the intermediate instances.

The proof of \Cref{thm:TreesWOD2Vertices} hinges on \Cref{lem:BoundOnInt}, 
which is not true for trees with degree-$2$ vertices. We suspect therefore that 
another proof strategy is necessary for proving the full conjecture. 
Nevertheless, we show that \Cref{thm:TreesWOD2Vertices} can be used to find 
bounds on the burning number for general graphs by adding leaves.
\begin{corollary}\label{cor:AddLeaves}
	Let~$T$ be a tree on~$n$ vertices. If~$T$ has~$d$ degree-2 vertices, then
	\[b(T)\le \ceil{\sqrt{n+d}}.\]
\end{corollary}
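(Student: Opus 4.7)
The plan is to reduce \Cref{cor:AddLeaves} to \Cref{thm:TreesWOD2Vertices} by padding $T$ into a HIT via the addition of pendant vertices. First, I would construct $T'$ from $T$ by attaching a new leaf $\ell_v$ to each degree-2 vertex $v$ of $T$. Then $T'$ is still a tree: every original degree-2 vertex acquires degree 3, each $\ell_v$ has degree 1, and all other vertex degrees are unchanged, so $T'$ is a HIT on $n+d$ vertices. Applying \Cref{thm:TreesWOD2Vertices} to $T'$ immediately yields $b(T') \le \ceil{\sqrt{n+d}}$.

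The remaining step is to show $b(T) \le b(T')$. I would take a burning sequence $S' = (x_1, \ldots, x_k)$ for $T'$ of length $k = b(T')$ and build a sequence $S$ of the same length for $T$ by replacing each $x_i$ that is an added leaf $\ell_v$ by its unique neighbor $v$, while keeping the other sources untouched (any duplicates introduced this way only waste rounds and do no harm). The key observation making the replacement valid is that each $\ell_v$ is a leaf in $T'$, hence no shortest path in $T'$ between two vertices of $T$ passes through any $\ell_v$, and therefore $d_{T'}(a,b)=d_T(a,b)$ for all $a,b \in V(T)$. Now for any $w \in V(T)$ that is burned in $T'$ by source $x_i$ at some round $j \le k$ we have $d_{T'}(w,x_i) \le j-i$: if $x_i \in V(T)$, then $d_T(w,x_i) = d_{T'}(w,x_i) \le j-i$, so $w$ is burned in $T$ by round $j$ under $S$; if instead $x_i = \ell_v$, then $d_T(w,v) = d_{T'}(w,\ell_v) - 1 \le j-i-1$, so $w$ is burned in $T$ by round $j-1$ under $S$. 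In either case $w$ is burned by round $k$, establishing $b(T) \le k = b(T') \le \ceil{\sqrt{n+d}}$.

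The main obstacle I expect is making the surgery rigorous, specifically verifying that replacing each $\ell_v$ by $v$ cannot delay the burning of any vertex of $T$. The leaf observation above is what makes this step go through cleanly, since leaves can never serve as interior points of shortest paths, and substituting the source one step closer along the relevant path can only advance, never retard, the progress of the fire.
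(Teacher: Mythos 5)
Your proposal is correct and is essentially the paper's own proof, which simply says ``add a leaf to every degree-$2$ vertex and apply \Cref{thm:TreesWOD2Vertices}.'' The only difference is that you explicitly verify the implicit step $b(T)\le b(T')$ (via the source-replacement argument, which is sound since $T$ is an isometric subtree of $T'$ and the definition of a burning sequence tolerates repeated sources), whereas the paper leaves this detail to the reader.
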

\begin{proof}
	Add a leaf to every degree-$2$ vertex and apply 	
	\Cref{thm:TreesWOD2Vertices}.
\end{proof}
Unfortunately, this bound underperforms the recent bound presented 
in~\cite{das2023burning} for large enough~$n$, 
which shows that for a tree on~$n$ vertices and~$d$ degree-2 vertices,
\[b(T) \le \ceil{\sqrt{n+d+8}}-1.\]

In future, it may be of interest to find bounds on the burning number of 
general graphs by looking at spanning trees with the maximum number of leaves 
({\sc Maximum Leaf Spanning Tree} or equivalently, {\sc Connected Dominating 
Set}; the problems are known to be NP-complete~\cite{lemke1988maximum}, but 
some bounds are known~\cite{ding2001spanning,Alon2023}). This 
would give a bound on the number of degree-$2$ vertices, which can 
be used in combination with the above burning number bound and 
\Cref{lem:Subtree}.

\paragraph{Acknowledgement}
The author would like to thank Mark Jones for introducing him to the graph 
burning problem, and Paul Bastide for giving helpful comments on the manuscript.


\newcommand{\etalchar}[1]{$^{#1}$}

\end{document}